\newtheorem{proposition}{Proposition}[section]
\newtheorem{construction}{Construction}[section]
\newtheorem{lemma}{Lemma}[section]
\newtheorem{remark}{Remark}[section]
\begin{document}
\title{Semistable fibrations over an elliptic curve with only one singular fibre.}
\author{Abel Castorena }
\address{Centro de Ciencias Matem\'aticas. Universidad Nacional Aut\'onoma de M\'exico, Campus Morelia.
Apdo. Postal 61-3(Xangari). C.P. 58089. Morelia, Michoac\'an. M\'exico.\\}
\email{abel@matmor.unam.mx}

\author{Margarida Mendes Lopes}

\address{CAMGSD/Departamento de Matem\'atica, Instituto Superior T\'ecnico. Av. Rovisco Pais, 1049-001. Lisboa, Portugal.}
\email{ mmlopes@math.tecnico.ulisboa.pt}
\author{Gian Pietro Pirola}
\address{Dipartimento di Matematica,
Universit\`a di Pavia,
Via Ferrata, 1,
 27100 Pavia, Italy}
\email{gianpietro.pirola@unipv.it }
\thanks{
   }

\begin{abstract} In this work we describe a  construction  of semistable fibrations  over an elliptic curve with one unique singular fibre and we give effective examples using monodromy of curves. 
\end{abstract}
\subjclass[2010]{ Primary 14H10 14D06  Secondary 114C20 14J29}
\keywords{minimal number of fibres, semistable fibrations over elliptic curves, ramified covers of curves, monodromy, surfaces of general type.}

\maketitle

\section{Introduction}  Let $X$ be a  compact smooth complex algebraic surface.  A fibration of $X$ is a morphism with connected fibres $\phi:X\to B$, where $B$ is a smooth curve.   It is well known that if $g(B)\leq 1$ and the fibration is not isotrivial  (i.e. such that not all smooth fibres are isomorphic) then it has at least one singular fibre (cf.  \cite [Th\'eor\`eme  4]{Sz},  \cite{Pa}).  

In this short note we describe a  construction  of semistable fibrations  over an elliptic curve with one unique singular  fibre and we give effective examples (section 2).  We also establish some general properties of such fibrations (section 1). 
Note that the existence of  these semistable fibrations contrasts with Remark 3 in (\cite{Pa}) which states that any fibration over an elliptic curve has at least two singular fibres. This claim is said to be an immediate consequence of  arguments used in the  proof of Theorem 4 of (\cite{Pa}). However it is not clear how those arguments are used. On the other hand there exist fibrations of genus 3 over an elliptic curve with a unique singular reduced fibre (cf. \cite{Ish}).

Our construction starts from  constructing certain ramified (non Galois) covers  $C\to E$ of elliptic curves using monodromy  and showing that under suitable hypothesis the surface $C\times C$  has a fibration $\phi: C\times C\to E$ as required. 

\smallskip

\noindent {\bf Notations and conventions.}  We work over the complex numbers.  $\sim$ denotes numerical equivalence of divisors, whilst $\equiv$ denotes linear equivalence. The composition of permutations is done right to left. 

\section{Generalities for a fibration over an elliptic curve with one unique singular fibre}

Let  $E$ be an elliptic curve with origin $O_E\in E$, and let $K_E$ be the canonical bundle of $E$.
Let $X$ be a compact smooth complex surface and  $\phi: X\to E$ be a relatively minimal semistable fibration  with general fibre  $F$ a curve of genus $g\geq 2$.   
 
 Note that $X$ is a minimal surface, since any rational curve must be vertical.  By the same reason $X$ is not birational to a ruled surface.

Let $\omega_{X|E}$ be the relative  dualizing sheaf, and let $K_X$ be the canonical divisor of $X$. Denote $\Delta(\phi)=\deg(\phi_\ast(\omega_{X|E}))$.  Since $K_E$  is trivial, the relative canonical divisor $K_{\phi}$ is linearly equivalent to $K_X$, that is, 

$$K_{\phi}\equiv K_X-\phi^\ast{K_E}\equiv K_X.$$ 

\noindent Since $E$ is of genus one, one has $\Delta(\phi)=\chi(\mathcal O_X)$ and $K^2_{X|E}=K^2_X$. Set $\chi:=\chi\mathcal (O_X)$ and let $c_2:=c_2(X)=\chi_{top}(X)$ be the topological Euler characteristic of $X$.  Remark that for a semistable fibration over an elliptic curve $c_2$ is exactly the number of nodes of the singular fibres. 

Assume also that the general fibres of $\phi$ are not all isomorphic (since $X$ is minimal and the fibration is  semistable this is the same as  saying that the fibration is not trivial).
Then, by Arakelov's theorem (see \cite{Be3}),  $K^2_X>0$.  Since $S$ is not a rational surface,   by the classification of surfaces  we conclude that    $S$ is of general type   and hence $\chi\geq 1$.  

  

 \vskip1mm Define, as usual,  $q=h^1(\mathcal O_X)$ and $p_g=h^0(K_X)$ to  be respectively the irregularity and the geometric genus of $X$.

By  \cite{Be3}, $1\leq q< g+1$, where the  strict inequality comes from the fact that we are assuming that the fibration is not trivial.
We recall  the sharpening of Vojta's inequalities by Tan (see \cite{Ta}, Theorem 2 and Lemma 3.1),  that in our case (i.e. $\phi:X \to E$  relatively minimal, not trivial and semistable) says that when the number $s$ of singular fibres  is positive then
  
$$\chi< \frac{g}{2}s\hskip2mm\text{ and }\hskip2mm K_X^2< (2g-2) s.$$

Finally we recall Beauville's  formula for a reducible semistable fibre $F_0$ (see the proof of Lemme 1 of \cite{Benombre}): 
$$  n= g-g(N)+c-1$$   where $g$ is the genus of the general fibre,  $N$ is the normalisation of $F_0$, $g(N)=h^1(N, \mathcal O_N)$, $c$ is the number of components of $F_0$ and $n$ is the number of double points of $F_0$.
\vskip1mm

\begin{proposition} Let $X$ be a  compact smooth complex surface, $E$ an elliptic curve  and  $\phi:X \to E$  a semistable fibration with general fibre $F$  of genus $g$ and   with one unique singular fibre $F_0$. Then: 

\begin{enumerate}[i)]

\item $K_X^2< 2g-2;$ 
\item $\chi<  \frac{g}{2};$
\item $c_2<4g+2;$
\item if $\phi$ is stable, $c_2\leq 3g-3;$
\item  if $\phi$ is stable and $c_2= 3g-3,$ then $q=1$ and $\phi$ is the Albanese map;

 \item if $\phi$ is stable, $2<\frac{12\chi}{5}< g-1$. 
 \end{enumerate}
 
 In particular if $\phi$ is stable, $g\geq 4$.
\end{proposition}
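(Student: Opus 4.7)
The plan is to deduce items (i)--(vi) in turn from the inequalities recalled in the preamble, together with Xiao's slope inequality and a short structural argument in the extremal case.

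\medskip
\noindent\textbf{Items (i) and (ii).} Apply Tan's sharpening of Vojta's inequalities (stated above) with $s=1$: since the number of singular fibres is exactly one,
$$K_X^2 < (2g-2)\cdot 1 = 2g-2, \qquad \chi < \tfrac{g}{2}.$$

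\medskip
\noindent\textbf{Item (iii).} Invoke Xiao's slope inequality, which for a non-trivial semistable fibration of genus $g\ge 2$ gives $K_{X|E}^2 \ge \tfrac{4(g-1)}{g}\chi$. Since the base is elliptic, $K_{X|E}^2 = K_X^2$, so Noether's formula $12\chi = K_X^2 + c_2$ yields
$$c_2 = 12\chi - K_X^2 \;\le\; 12\chi - \tfrac{4(g-1)}{g}\chi \;=\; \tfrac{4(2g+1)}{g}\chi.$$
Inserting $\chi < g/2$ from (ii) gives $c_2 < 2(2g+1) = 4g+2$.

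\medskip
\noindent\textbf{Item (iv).} If $\phi$ is stable then the unique singular fibre $F_0$ is itself a stable curve of arithmetic genus $g$. Beauville's identity $n = g - g(N) + c - 1$ together with the stability condition that every rational component carries at least three nodes gives, by a standard combinatorial count on the dual graph (maximum reached when all components are rational and trivalent), the classical bound $n \le 3g-3$. Since $c_2$ equals the number of nodes of $F_0$, we conclude $c_2 \le 3g-3$.

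\medskip
\noindent\textbf{Item (v).} Assume $\phi$ stable and $c_2 = 3g-3$. Beauville's formula then forces $g(N) = 0$ and $c = 2g-2$, so $F_0$ is totally degenerate: every component is $\mathbb P^1$ and the dual graph is trivalent with first Betti number $g$. For $q = 1$ I start from the Leray spectral sequence, which for $\phi_*\mathcal O_X = \mathcal O_E$ gives
$$q = h^1(\mathcal O_X) = h^1(E,\mathcal O_E) + h^0(E, R^1\phi_*\mathcal O_X) = 1 + h^0(E, R^1\phi_*\mathcal O_X),$$
and apply relative Serre duality $R^1\phi_*\mathcal O_X \cong (\phi_*\omega_{X|E})^{\vee}$: the vanishing of the last $h^0$ is equivalent to $\phi_*\omega_{X|E}$ admitting no trivial summand on the elliptic base. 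By Fujita's decomposition, such a summand would correspond to a flat unitary sub-local-system of $R^1\phi_*\mathbb C$, i.e.\ an isotrivial sub-VHS, which is ruled out by the maximally unipotent local monodromy forced at the totally-degenerate fibre $F_0$. Once $q=1$, $\mathrm{Alb}(X)$ is an elliptic curve and the universal property of the Albanese factorises $\phi$ as
$$X\;\xrightarrow{\;\alpha\;}\;\mathrm{Alb}(X)\;\xrightarrow{\;\psi\;}\;E$$
with $\psi$ an isogeny of some degree $d$. If $d > 1$, the generic fibre $\phi^{-1}(p)$ would be the disjoint union of the $d$ non-empty Albanese fibres over $\psi^{-1}(p)$, contradicting connectedness. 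Hence $d = 1$, $\psi$ is an isomorphism, and $\phi$ coincides with the Albanese map up to translation.

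\medskip
\noindent\textbf{Item (vi).} Adding the strict bound (i) to the bound (iv) via Noether's formula,
$$12\chi = K_X^2 + c_2 < (2g-2) + (3g-3) = 5(g-1),$$
so $\tfrac{12\chi}{5} < g-1$. The lower bound $\tfrac{12\chi}{5} > 2$ is immediate from $\chi \ge 1$, since $12/5 > 2$. Combining, $g - 1 > 12/5$, i.e.\ $g \ge 4$.

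\medskip
\noindent\textbf{Main obstacle.} Step (v) is the delicate part: the numerical bounds (i)--(iv) leave room for $q > 1$, so the equality $c_2 = 3g-3$ must be used structurally. The key point is to read it as a maximally unipotent degeneration and then exclude any isotrivial piece of the variation of Hodge structure, either through Fujita's decomposition of $\phi_*\omega_{X|E}$ on the elliptic base or, equivalently, through a Picard--Lefschetz analysis of the global monodromy representation on the $3g-3$ vanishing cycles.
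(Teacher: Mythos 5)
Items (i), (ii), (iv) and (vi) match the paper's argument in substance: (i)--(ii) are Tan's inequalities with $s=1$, (vi) is Noether's formula plus $\chi\geq 1$, and for (iv) your combinatorial count on the dual graph is a correct (if compressed) route to $n\leq 3g-3$; the paper gets there slightly more directly by noting that stability gives $K_X\cdot\theta\geq 1$ for every component $\theta$ of $F_0$, hence $c\leq K_X\cdot F_0=2g-2$, and then plugging into Beauville's formula. For (iii) you take a genuinely different path: the paper simply quotes Beauville's Szpiro-type inequality $c_2<4g+2$ from \cite{Be4}, whereas you derive it from Xiao's slope inequality $K^2_{X|E}\geq \frac{4(g-1)}{g}\chi$ combined with (ii) and Noether's formula. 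That derivation is valid (the fibration is relatively minimal and non-trivial, so Xiao applies, and over an elliptic base $\deg\phi_*\omega_{X|E}=\chi$), and it has the merit of being self-contained; the cost is importing the slope inequality, which the paper does not need.

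The genuine problem is in (v). The paper's proof is a two-line Albanese argument: equality forces $g(N)=0$, so every component of $F_0$ is rational, hence the Albanese map contracts $F_0$ to a point; since all fibres are algebraically equivalent, it contracts every fibre, so it factors through $E$, giving $q=1$ and $\phi=\mathrm{alb}$. Your route through Leray, relative duality and Fujita's decomposition is legitimate up to the reduction ``$q>1$ iff $\mathcal O_E$ is a direct summand of $\phi_*\omega_{X|E}$,'' but the step that is supposed to close the argument --- that such a flat unitary summand is ``ruled out by the maximally unipotent local monodromy forced at the totally-degenerate fibre $F_0$'' --- is asserted, not proved, and as stated it identifies the wrong mechanism. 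A unipotent unitary local monodromy is the identity, so the putative flat sub-local-system $\mathbb U\subset R^1\phi_*\mathbb C$ simply extends across $O_E$; maximal unipotency does not make the invariant subspace vanish (for a totally degenerate fibre the monodromy-invariant part $\ker N$ has dimension $g$, namely the image of $H^1(F_0)=H^1(\Gamma)$ for $\Gamma$ the dual graph). To actually get a contradiction you must add the weight argument: the invariant part is of weight $0$ in the limit mixed Hodge structure, hence cannot contain the $(1,0)$-piece $U\subset F^1$ of a weight-one unitary sub-VHS. Without that, step (v) is incomplete --- and even with it, it is considerably heavier than the Albanese contraction argument, which uses only that rational curves map to points in an abelian variety.
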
 

\begin{proof} If $\phi$ is semistable with one unique singular fibre, the inequalities $K^2_X< 2g-2$  and $\chi<  \frac{g}{2}$ follow from the above mentioned sharpening of Vojta's inequalities.  The inequality $c_2<4g+2$ is the proposition of \cite {Be4}.

Let $F_0$ be the unique singular fibre.  Note that $c_2$ is exactly the number $n$ of double points of $F_0$. If $\phi$ is stable, every component $\theta$ of $F_0$ satisfies $K_X\theta\geq 1$. Since $K_X F_0=2g-2$, the number $c$ of components of $F_0$ is at most  $2g-2$.  
Using the above mentioned Beauville's formula  we obtain
$$c_2=g-g(N)+c-1\leq g-g(N)+2g-2-1\leq 3g-3-g(N)$$ and so  $c_2\leq 3g-3$. 

If equality holds we conclude that $c=2g-2$ and that $g(N)=0$, i.e. every component of $F_0$ is a rational curve.  But then the Albanese map of $X$ contracts $F_0$ to a point and therefore also all fibres of $\phi$. Hence $\phi$ is the Albanese map of $X$ and $q=1$.


 Also, we obtain $12\chi= c_2+K^2< 5(g-1)$ and so $$ 2<\frac{12\chi}{5}<g-1,$$
where the first inequality comes from the fact that $X$  has $\chi\geq 1$.
\end{proof} 
\begin{remark}

{\rm  Note that if $\phi$ is stable with one unique singular fibre and $g=4$, then $\chi=1$, $K^2\leq 5$ and $c_2\leq 9$. Now $c_2=9$ means $K_X^2=3$.  From Debarre's inequality, \cite {De}, $K^2\geq 2p_g$ for irregular surfaces we conclude that $q=1$ and $\phi$ is the Albanese fibration. But (see \cite {CC}) for $K^2=3$, $p_g=q=1$  the genus of the general fibre of the Albanese fibration is $2$ or $3$. Therefore we have $4\leq K^2\leq 5$. 

We also remark that $\chi\geq 2$ implies $g\geq 6$. 

}	
\end{remark}

\section{Covering maps and semistable fibrations.}

Let $E$ be an elliptic curve and let $O_E\in E$ be the origin. Let $C$ be a smooth curve of genus $g_C$ and let $f:C\to E$ be a covering map of degree $d$. 
We have the following 
\begin{construction}  Set $S=C\times C$, and for any map $f:C\to E$, let $\phi: S\to E$ $$ \phi(a,b)=f(a)-f(b)$$ be the difference map.
\end{construction}

Despite the fact that $S$ is a very simple surface and that the construction  is very elementary,  the map $\phi$ can be interesting. We start by giving a condition that assures  the connectedness of the fibres of $\phi.$

\begin{proposition} \label{cons} Assume that the map $f:C\to E$ is primitive of degree $d$ and $g_C>1$, (that is,  $f$ is not decomposable and not \'etale).  Then  the fibres of $\phi: S\to E$ are connected and the general fibre $F$ of the fibration $\phi$ has  genus \label{fibra} $g(F)=2(g_C-1)d+1$. \label{cons} 
\end{proposition}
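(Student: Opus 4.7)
The plan is to establish the connectedness of the general fibre first, and then compute its genus via Riemann--Hurwitz.

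For connectedness I would consider the Stein factorisation $\phi = h \circ g$, with $g\colon S \to E'$ having connected fibres and $h\colon E' \to E$ a finite morphism of degree $k$ between smooth projective curves; the objective is to prove $k=1$. For each $a_0 \in C$ the slice inclusion $\iota_{a_0}\colon C \hookrightarrow S$, $b \mapsto (a_0,b)$, composed with $g$ yields $\tilde f_{a_0} := g \circ \iota_{a_0}\colon C \to E'$ satisfying $h \circ \tilde f_{a_0} = f(a_0) - f$, so $\deg \tilde f_{a_0} = d/k$; moreover $g|_{\Delta_C}$ is constant because $\phi|_{\Delta_C} = 0$ and $h^{-1}(0)$ is finite. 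The case $g(E') = 0$ is impossible, since $\mathbb{P}^1$ admits no non-constant morphism to the elliptic curve $E$. If $g(E') = 1$, Riemann--Hurwitz forces $h$ to be étale, and $f$ factorises (up to a translation of $E$) as $C \xrightarrow{\tilde f_{a_0}} E' \xrightarrow{h} E$; primitivity of $f$ then requires $k = 1$ or $d/k = 1$, and the latter would give $C \cong E'$ and hence $g_C = 1$, excluded by hypothesis.

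The main difficulty is the case $g(E') \geq 2$. If $d/k > 1$, the same factorisation $C \xrightarrow{\tilde f_{a_0}} E' \xrightarrow{h} E$ again violates primitivity, so the sensitive sub-case is $d/k = 1$, in which every $\tilde f_{a_0}$ is an isomorphism $C \xrightarrow{\sim} E'$. Fixing one reference point $a_0^* \in C$, I would set $\tau_{a_0} := \tilde f_{a_0^*}^{-1} \circ \tilde f_{a_0} \in \mathrm{Aut}(C)$ and verify that $f \circ \tau_{a_0} = t_c \circ f$ with $c = f(a_0^*) - f(a_0)$; from this identity $\tau_{a_0} = \tau_{a_0'}$ forces $f(a_0) = f(a_0')$. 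Hence $a_0 \mapsto \tau_{a_0}$ defines a non-constant morphism from the irreducible curve $C$ into the finite scheme $\mathrm{Aut}(C)$, impossible when $g_C \geq 2$. Therefore $k = 1$, and $\phi$ has connected fibres.

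For the genus of a general fibre $F = \phi^{-1}(e)$, I would apply Riemann--Hurwitz to the first projection $p_1\colon F \to C$, $(a,b) \mapsto a$, which has degree $d$. For generic $e$ the branch locus $B$ of $f$ is disjoint from $B + e$, and a local analytic computation at a point $(a,b)$ with $b \in R_f$ shows that $p_1$ is ramified there with index equal to the ramification index $e_b$ of $b$ for $f$, while $a$ is necessarily unramified. Since each such $b$ admits exactly $d$ compatible choices of $a$, the ramification divisor of $p_1$ has degree $d \sum_b (e_b - 1) = d \cdot \deg R_f = d(2g_C - 2)$, the last equality being Riemann--Hurwitz for $f\colon C \to E$. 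Feeding this into Riemann--Hurwitz for $p_1$ yields $2 g(F) - 2 = d(2g_C - 2) + d(2g_C - 2)$, whence $g(F) = 2(g_C - 1) d + 1$.
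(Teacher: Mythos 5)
Your proof is correct, and it reaches the conclusion by a genuinely different route. For connectedness the paper also starts from the Stein factorisation $S\to Y\to E$ and uses primitivity of $f$ to force the slice map $C\to Y$ to have degree one, so that $Y\cong C$; but it then derives the contradiction by pure intersection theory on $S=C\times C$: a connected component $\widetilde F$ of the general fibre would satisfy $\widetilde F^2=0$ and $\widetilde F\cdot C_1=\widetilde F\cdot C_2=1$, so adjunction with $K_S\sim(2g_C-2)(C_1+C_2)$ gives $g(\widetilde F)=2g_C-1$, while the degree-one projection $\widetilde F\to C$ gives $g(\widetilde F)=g_C$ --- impossible since $g_C>1$. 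You instead run a case analysis on $g(E')$: primitivity excludes every configuration except $d/k=1$ (note that the sub-case $k>1$, $d/k>1$ contradicts primitivity for any $E'$, so the trichotomy on $g(E')$ only really matters when $d/k=1$), and you dispose of $d/k=1$ via $g_C>1$ when $g(E')\le 1$ and via the finiteness of $\mathrm{Aut}(C)$ when $g(E')\ge 2$; this is sound, the key observation being that distinct values of $f(a_0)$ force distinct $\tau_{a_0}$, which makes $a_0\mapsto\tau_{a_0}$ non-constant into a finite set. For the genus you apply Riemann--Hurwitz to the degree-$d$ projection $F\to C$, whereas the paper's formula comes from the same adjunction computation as above ($2g(F)-2=K_S\cdot F=2d(2g_C-2)$). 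The paper's argument is shorter and settles connectedness and the genus with a single intersection-theoretic computation on the product surface; yours avoids intersection theory altogether and rests only on covering-space considerations (Stein factorisation, primitivity, finiteness of automorphisms), at the cost of the extra $\mathrm{Aut}(C)$ step.
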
 

\begin{proof} We have to show that  $\phi$ has connected fibres. Suppose by contradiction that the general fibre $F$ of $\phi$ is not connected. By the Stein factorization theorem, there exists a smooth curve $Y$, a finite morphism $h:Y\to E$, $\deg h>1$, and a morphism $\widetilde\phi:S\to Y$ with connected fibres such that the following diagram is commutative:  
$$
\xymatrix{S\ar[r]^{\phi}\ar[dr]_{\widetilde\phi} & E\\ & Y\ar[u]_{h}.\\}
$$
Fix a point $q\in C$ such that $f(q)=O_E,$ consider the inclusion $i_q: C\to C\times C$, $i_q(p)=(p,q)$,  and let $C_1:=i_q(C)$. We have $\phi\circ i_q=f.$   Setting $\kappa=\widetilde\phi \circ i_q,$ we get a decomposition $f =\kappa\circ h$, but since $f$ is not decomposable, $\deg \kappa=1$ and $\deg h=d$.  It follows that $Y$ is isomorphic to $C$. Moreover, for  the general fibre of $\phi$  one has $$F=\sum_i^d F_i$$ and therefore numerically $F\sim d\widetilde F$, $\widetilde F=F_1$. 
Since $F\cdot C_1=d$ we get $\widetilde F\cdot C_1=1$. By symmetry we also have  $\widetilde F\cdot C_2=1$, where $C_2 =\{q\}\times C.$ 

 Since $K_S\sim (2g_C-2)C_1+(2g_C-2)C_2$, and $\widetilde F^2=0$ we obtain by the adjunction formula
$2g({\widetilde F})-2= 4g_C-4$, i.e. $g({\widetilde F})=2g_C-1$. But on the other hand  $\widetilde F\cdot C_j=1$ implies, by considering  one of the projections onto $C$ of $C\times C$ that $g({\widetilde F})=g_C$, a contradiction.

\end{proof}

We want to consider now the case when the map $f:C\to E$ has a unique branch point, which will assume to be  $O_E\in E$.


 
\begin{proposition} Assume that $f:C\to E$ is primitive and has  only $O_E$ as a critical value. Let $ F_0=\phi^{-1}(O_E)$ be the fibre over the origin. Then: 

\begin{enumerate} [i)] \item The map $\phi$  is smooth outside  $F_0$;
\item $F_0$ decomposes as $F_0=\Gamma+\triangle$, where $\triangle\subset S$ is the diagonal and $\Gamma=F_0-\triangle$;

 \item   $\triangle\cdot \Gamma=2g_C-2$.
\item If the ramification divisor of $f$ is reduced, then $F_0$ has only nodes as singularities and $\phi$ is a stable fibration with only one singular fibre. 
\end{enumerate} 
 
  \label{crit}\end{proposition}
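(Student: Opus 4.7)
The plan is to handle (i)--(ii) via the differential of $\phi$, (iii) by intersection theory on $C \times C$, and (iv) by a local normal form at the nodes plus a positivity argument for stability. For (i), I would compute $d\phi_{(a,b)}$ by translating tangent spaces to $O_E$ using the group law on $E$: this yields the map $T_aC \oplus T_bC \to T_{O_E}E$, $(v,w) \mapsto df_a(v) - df_b(w)$, which fails to be surjective precisely when both $df_a$ and $df_b$ vanish, i.e.\ when $a$ and $b$ are both critical points of $f$. Since $O_E$ is the only critical value, this forces $(a,b) \in F_0$. For (ii), $\Delta \subset F_0$ is immediate from $\phi|_\Delta \equiv O_E$; at a point $(a,a) \in \Delta$ with $a$ unramified, $f$ itself is a local coordinate on $C$ at $a$, so in the induced coordinates $\phi$ reads $x_1 - x_2$ and its zero locus is $\Delta$ with multiplicity one, making $\Gamma := F_0 - \Delta$ effective.

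For (iii), the general fibre $F$ of $\phi$ is disjoint from $\Delta$, since $\phi(\Delta) = \{O_E\}$ is a single point, whence $F \cdot \Delta = 0$ and by numerical equivalence $F_0 \cdot \Delta = 0$. Combined with the standard computation $\Delta^2 = 2 - 2g_C$ on $C\times C$, this gives $\Gamma \cdot \Delta = -\Delta^2 = 2g_C - 2$.

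For (iv), the singular locus of $F_0$ consists, by (i), of the pairs $(p,q)$ with both $p$ and $q$ critical for $f$. Since the ramification is simple, I choose local coordinates $x$ near $p$ and $y$ near $q$ in which $f$ reads $x \mapsto x^2$ and $y \mapsto y^2$ respectively; then $\phi$ is locally $x^2 - y^2 = (x-y)(x+y)$, an ordinary node, so $F_0$ is nodal. For stability I would verify $\omega_{F_0} \cdot \theta > 0$ for each component $\theta$ of $F_0$: since $F_0 \cdot \theta = 0$ we have $\omega_{F_0} \cdot \theta = K_S \cdot \theta$, and writing $K_S \sim (2g_C - 2)(C_1 + C_2)$, the key observation is that no fibre component $\theta$ can equal a horizontal curve $\{a\}\times C$ or $C\times\{b\}$, as the latter surject onto $E$ under $\phi$. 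Hence both restrictions $p_i|_\theta$ are non-constant, yielding $K_S \cdot \theta \geq 2(2g_C - 2) > 0$ (with $g_C \geq 2$ guaranteed by Riemann--Hurwitz, since $f$ is ramified).

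The main obstacle is the local coordinate matching in (iv): pairing the two simply-ramified branches at each double-ramification point to recognise the normal form $\phi = (x-y)(x+y)$. Once that is in place, both the nodal structure of $F_0$ and the stability of the fibration fall out directly, the latter from positivity of $K_S$ on each fibre component, which is automatic from the product structure of $S$.
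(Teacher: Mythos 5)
Your proof is correct and follows essentially the same route as the paper's: the critical locus of $\phi$ is the set of pairs of critical points of $f$ (you via the differential, the paper via vanishing of $\phi^*\eta$ for a generator $\eta$ of $H^0(E,K_E)$), part (iii) comes from $\triangle\cdot F_0=0$ together with $\triangle^2=2-2g_C$, and the nodes come from the local normal form $z_1^2-z_2^2$. The only (minor) divergence is the last step of (iv): the paper deduces stability from the fact that $S=C\times C$ contains no rational curves, while you verify $K_S\cdot\theta>0$ directly on each fibre component; both are valid, and your explicit check of the multiplicity of $\triangle$ in $F_0$ is a small point the paper leaves implicit.
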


\vskip2mm

\begin{proof} The only critical value of $f$ is the origin $O_E.$  Let $\eta\in H^0(E, K_E)$,  $\eta\neq 0$,  be a generator, then $\phi^\ast(\eta)=(\eta_1,\eta_2)=(f^\ast(\eta), -f^\ast(\eta))$. The critical locus $C(\phi)$ of $\phi$ is then defined by the vanishing of $\phi^\ast(\eta)$, that is,  
 $$C(\phi):=\{(p,q)\in S: p,q\in\text{zero locus of }f^\ast(\eta)\}.$$ In particular, $C(\phi) \subset F_0=\phi^{-1}(O_E)$ and therefore $O_E$ is the only critical value of $\phi.$ Since 
 $\triangle\subset F_0$,  $\triangle\cdot F_0=0$   and so 
$$\triangle\cdot \Gamma= \triangle\cdot  (F_0-\triangle)=-\triangle^2=2g_C-2.$$

Suppose that the ramification divisor of $f$ is reduced. Then in any ramification point $p_j\in C$ of $f$,  $f$ locally is the map $z\to z^2$. So, for $(p_1,p_2)\in C(\phi)$, we can find local coordinates $(z_i, U_{p_i})$,  $i=1,2$, on $C$ such that $z_i(p_i)=0$, and a local coordinate $(t,W)$ on $E$, $O_E\in W$ and $t(O_E)=0$, such that locally $f$ and $\phi$ are given by
  
$$f(z_i)=z_i^2,\hskip3mm \phi(z_1,z_2)=z_1^2-z_2^2.$$

\vskip2mm
\noindent Since $F_0\cap U_1\times U_2$ around $(p_1, p_2)$ is given by $z_1^2-z_2^2=0, $ this proves that the singularities of $F_0$ are nodes. Since $S$ does not contain any rational curve,  $\phi$ is a stable fibration.
\end{proof}


\vskip4mm  

In view of Proposition \ref {crit},  to obtain with this construction explicit examples of semistable fibrations over an elliptic curve with one unique singular fibre, 
we need to find primitive covers $f:C\to E$ such that  $O_E$ is the only branch point and the ramification divisor $R$ of $f$ is  reduced.


\vskip 2mm

We recall (see \cite[Ch. III]{Mi})  that given  a smooth complex irreducible projective curve
$E$,  a finite subset $B$  of $E$ and any point $x_0\in E-B$, there is a one-to-one correspondence between the set of isomorphism classes of covering maps $f:C\to E$ of degree $d$ whose branch points lie in $B$ and the set of group homomorphisms $\rho:\pi_1(E-B, x_0)\to S_d$ with transitive image (up to conjugacy in $S_d$).  The covering map is primitive if $Im(\rho)$ is a primitive subgroup of $S_d$. 

 In the case at hand, i.e. where $E$ is an elliptic curve $E$ and $B=O_E$ is a single point,  the fundamental group $\Pi_1:=\pi_1(E-\{O_E\}, x_0)$ is a free group on two generators $x,y$.  The monodromy homomorphism of groups $\rho: \Pi_1\to S_d$ is given by $\rho(x)=\alpha, \rho(y)=\beta$ and the ramification type of  the covering $f:C\to E$ is encoded in the decomposition of the commutator  $[\alpha, \beta]$  in $S_d$ as   a product of disjoint cycles.  The ramification divisor $R$ of $f$ is reduced if and only if every cycle in such a  decomposition   has length $2$.
 
  So to give explicit examples it is enough to find transitive  and primitive subgroups $G\subset S_d$ generated by two permutations $\alpha,\beta,$ such that $[\alpha, \beta]$ is a product of disjoint transpositions. We give here some  examples. 
  
  \vskip3mm
  
\noindent{\bf{Examples :}} 
 
 \vskip1mm

1)  Consider the subgroup $G\subset S_4$ generated by the permutations $\alpha,\beta,$ where:

$$  \hskip0.8mm\alpha=(1,\hskip1mm 2, \hskip1mm 3), \hskip0.8mm \beta=(2,\hskip1mm 3,\hskip1mm 4). $$

Then   $G$ is obviously a transitive subgroup and $$ [\alpha,\beta]=\alpha\beta\alpha^{-1}\beta^{-1}=(1,\hskip1mm 4)(2,\hskip1mm 3)$$

 Since the order of a block divides $4$,  if $G$ were not primitive, we could have only two blocks preserved. On the order hand $\alpha$ has order $3$ and therefore  $\alpha$ cannot preserve or interchange blocks.



In this case,   $C$ is a curve of genus $2$  and the general fibre of $\phi$ has genus  $9$ (cf. Proposition \ref{cons}).

\vskip2mm

2) This example is due to Pietro Corvaja.
\vskip1mm
\noindent Consider the subgroup $G\subset S_8$ generated by the permutations $\alpha,\beta,$ where:
$$\alpha= (1,\hskip0.7mm 2,\hskip0.7mm 3,\hskip0.7mm 3, \hskip0.7mm 5,  \hskip0.7mm 6,  \hskip0.7mm 7),\hskip4mm\beta= (8,\hskip0.7mm 3,\hskip0.7mm 4,\hskip0.7mm 1, \hskip0.7mm 5,  \hskip0.7mm 6),$$ 

then we have

$$ [\alpha,\beta]= (1,\hskip0.5mm 5)(2,\hskip0.5mm 6)(3,\hskip0.7mm 4)(7,\hskip0.7mm 8).$$
\vskip1mm
Since $G$ is isomorphic to $PGL_2(F_7)$, $G$ is transitive and primitive.  In this case,   $C$ is a curve of genus $3$  and the general fibre of $\phi$ has genus  $33$ (cf. Proposition \ref{cons}).

\vskip2mm

3)  Consider the subgroup $G\subset S_d$,  $d=4n+1, n\geq 2$ generated by the permutations $\alpha,\beta,$ where:

$$ \alpha=(1,\hskip0.5mm 2)(3,\hskip0.5mm 4)\cdots(2n-1,\hskip0.7mm 2n),
\hskip0.8mm\beta=(1,\hskip0.7mm 2n+1)\hskip0.7mm (2,\hskip0.7mm 2n+2,\cdots 2n, \hskip0.7mm 4n,  \hskip0.7mm 4n+1)$$

Then 
$$[\alpha,\beta]=(1,\hskip0.5 mm 2)(3,\hskip0.5mm 4)\cdots(4n-1,\hskip0.5 mm 4n).$$

Again $G$ is clearly a transitive subgroup of $S_d$.  Let  $\gamma= (1,\hskip0.7mm 2n+1)$ and $\delta=(2,\hskip0.7mm 2n+2,\cdots 2n, \hskip0.7mm 4n,  \hskip0.7mm 4n+1)$.  Then $\beta=\gamma \delta=\delta\gamma$ and furthermore $\beta^{4n-1}=\gamma$. Hence $G=<\alpha, \beta>$ contains $\gamma$ and $\delta$.

  Let $B$ be a block preserved by $G$ containing $1$. $B$ has cardinality $k>1$, because $2n+1\in B$. Since $k$ must divide $4n+1$, $k$ is odd and so $B$ contains another element $x$, $x\neq 1$, $x\neq 2n+1$.
 Since $x$ is a fixed element for $\gamma$, $\gamma(B)=B$. Similarly, since  $1$ is a fixed element for $\delta$,  $\delta(B)=B$.  So also $\beta(B)=B$ and therefore $B=\{1,2,....,4n+1\}$, i.e. $G$ is primitive. 

Here for each $n$,  $g(C)=n+1$  and the general fibre of $\phi$ has genus  $2nd+1$ (cf. Proposition \ref{cons}).
\vskip3mm
{\bf{Remark. }}We finally remark that recently, motivated by the theory of the  mapping class group, Pietro Corvaja and Fabrizio Catanese constructed several new examples.

\vskip4mm

\noindent {\bf Acknowledgments. } We want to thank   Fabrizio Catanese and Pietro Corvaja for fruitful comments and for the example 2 which was suggested by Pietro Corvaja.  We also thank  Alexis G. Zamora for bringing to our attention the paper of A. N. Parsin cited in this work.      
\vskip1mm
\noindent The first author was supported by Research Grant PAPIIT IN100716 (UNAM, M\'exico). The second author is a member of  the CAMGSD of  T\'ecnico-Lisboa, University of Lisbon and  she was partially supported by  FCT (Portugal)  through projects PTDC/MAT- GEO/2823/2014 and  UID/MAT/ 04459/2013. The third author was supported by PRIN 2015 €œModuli spaces and Lie Theory, INdAM - GNSAGA and FAR 2016 (PV) €Variet\`a algebriche, calcolo algebrico,
grafi orientati e topologici€.


\end{document}